%
%
%


\documentclass[reqno]{proc-l}


\usepackage{enumerate}
\usepackage{xcolor}

\usepackage{mathrsfs}
\usepackage{hyperref}




\newtheorem{theorem}{Theorem}[section]
\newtheorem{lemma}[theorem]{Lemma}

\theoremstyle{definition}
\newtheorem{definition}[theorem]{Definition}
\newtheorem{example}[theorem]{Example}
\newtheorem{corollary}[theorem]{Corollary}

\theoremstyle{remark}
\newtheorem{remark}[theorem]{Remark}

\numberwithin{equation}{section}

\begin{document}

\title[Very slowly growing variance]{Non-local in time telegraph equations and very slowly growing variances}


\author{Francisco Alegr\'ia}
\address{Instituto de Ciencias F\'isicas y Matem\'aticas. Facultad de Ciencias. Universidad Austral de Chile, Valdivia, Chile. Departamento de Matem\'atica y Estad\'istica. Facultad de Ingenier\'ia y Ciencias. Universidad de La Frontera, Temuco, Chile.}
\email{franciscoalegria@uach.cl}

\author{Juan C. Pozo}
\address{Departamento de Matem\'aticas, Facultad de Ciencias, Universidad de Chile, Las Palmeras 3425, \~{N}u\~{n}oa, Santiago, Chile. Partially supported by Fondecyt grant 1181084}
\email{jpozo@uchile.cl}

\subjclass[2010]{Primary 45K05, 34K25, 35R10}

\date{\today}

\dedicatory{}

\commby{}

\begin{abstract}
In this paper we consider a class of non-local in time telegraph equations. Recently, in \cite{Pozo-Vergara-2019-2} it has been proved that the fundamental solutions of such equations can be interpreted as the probability density function of a stochastic process. We study the asymptotic behavior of the variance of this process at large and short times. In this context, we develop a method to construct new examples such the variance has a slowly growth behavior, extending the earlier results. Finally, we show that our approach can be adapted to define new integro-differential operators which are interesting in sub-diffusion processes. 
\end{abstract}

\maketitle


\section{Introduction}

The study of non-local in time differential equations have received a lot of attention in last years due to its deep connection with non-local transport phenomena, control of stochastic jump processes, description of anomalous diffusion in physics and memory effects in parabolic equations, see \cite{Kemppainen-Siljander-Vergara-Zacher-2016,Kemppainen-Siljander-Zacher-2017,Vergara-Zacher-2015,Pozo-Vergara-2019-2,Pozo-Vergara-2019,Kemppainen-Zacher-2019} and references therein.

Let $k\in L_{1,loc}(\mathbb{R}_+)$ and $\eta,\nu$ be positive constants. In this paper we consider the  non-local in time telegraph equation  
\begin{equation}\label{Eq-Non-Local-k-k}
\partial_t^{2}\bigl(k\ast k\ast u(\cdot,z)\bigr)(t) + \eta\, \partial_t\bigl(k\ast u(\cdot,z)\bigr)(t)-\nu\, \partial_z^2 u(t,z)=0, \ t>0,\ z\in\mathbb{R},
\end{equation}
which has been recently proposed in \cite{Pozo-Vergara-2019-2}. We are interested into study some properties of the fundamental solution of \eqref{Eq-Non-Local-k-k}. For this reason we consider the initial conditions
\begin{equation}\label{Initial-Conditions}
u(0,z)=\delta_0(z)\quad \text{and}\quad \partial_t u(0,z)=0,\ z\in\mathbb{R},
\end{equation}
where the $\partial_t u(0,z)=0$ must be considered whenever it exists. We point out that the fundamental solution of \eqref{Eq-Non-Local-k-k} has been already studied in \cite[Section 4]{Pozo-Vergara-2019-2} assuming that $k$ is a kernel of type $(\mathcal{PC})$, which means that the following condition is satisfied.
\begin{enumerate}[$(\mathcal{PC})$]
\item $k\in L_{1,loc}(\mathbb{R}_+)$ is nonnegative, nonincreasing, and there is $\ell\in L_{1,loc}(\mathbb{R}_+)$ such that $(k*\ell) = 1$ in $(0, \infty)$. In this case we also write $(k,\ell)\in(\mathcal{PC})$.
\end{enumerate} 
In this paper we also assume that $k$ is a kernel of type $(\mathcal{PC})$ and we extend some of the results obtained in \cite{Pozo-Vergara-2019-2}.

It is worth mentioning that the most classical example of a pair $(k,\ell)\in(\mathcal{PC})$ is given by $(g_{1-\alpha},g_\alpha)$ with $\alpha\in(0,1)$, where $g_\beta$ with $\beta>0$ is the standard notation for the function
\[
g_{\beta}(t)=\dfrac{t^{\beta-1}}{\Gamma(\beta)},\quad t>0.
\] 
In this case \eqref{Eq-Non-Local-k-k}-\eqref{Initial-Conditions} takes the form of the {\it time fractional telegraph equation} 
\begin{align}
\partial_t^{2\alpha}u(t,z) + \eta\, \partial_t^\alpha u(t,z)-\nu\, \partial_z^2 u(t,z)&=0,\ t>0,\ z\in\mathbb{R},\label{Eq-Non-Local-fractional}\\
u(0,z)=\delta_0(z)\quad \text{and}\quad \partial_t u(0,z)&=0,\ z\in\mathbb{R},\label{Frac-Init-Cond}
\end{align}
which have been extensively studied, for instance see \cite{Orsingher-Beghin-2004,Vergara-2014} and references therein. In this case the initial condition on the derivative of $u$ only must be considered if $\alpha\in(\frac{1}{2},1]$. The solution $U_\alpha$ of \eqref{Eq-Non-Local-fractional}-\eqref{Frac-Init-Cond} exhibits several interesting properties, one of them being that $U_\alpha$ can be viewed as the probability density function of a stochastic process denoted by $X_\alpha(t)$, (cf. \cite{Orsingher-Beghin-2004}). Moreover, the variance of the process $X_\alpha(t)$ is given by 
\[
\textrm{Var}[X_\alpha(t)]=2\nu t^{2\alpha}E_{\alpha,2\alpha+1}(-\eta t^{\alpha}),\ t\ge 0,
\]
where $E_{\alpha,2\alpha+1}$ is an example of the so-called Mittag-Leffler function of two parameters, see \cite[Chapter 4]{Gorenflo-Kilbas-Mainardi-Rogosin-2014} or \cite[Appendix E]{Mainardi-2010} for several properties and results of this function.

Motivated by this result, it has been established in \cite[Theorem 1.1]{Pozo-Vergara-2019-2} that for every $(k,\ell)\in(\mathcal{PC})$ the fundamental solution $U(t,z)$ of \eqref{Eq-Non-Local-k-k} can be interpreted as a probability density function on $\mathbb{R}$ and there exists a process, denoted by $X(t)$, whose distribution
coincides with $U(t,\cdot)$ for all time $t>0$. Further, the corresponding variance of this process is positive, increasing on $(0,\infty)$ and it is given by
\begin{equation}\label{Var-0}
\mathrm{Var}[X(t)]=2\nu(1\ast r_{\eta}\ast\ell)(t), \quad t\ge 0,
\end{equation}
where $r_\eta$ is the {\it integrated resolvent associated to $\ell$}, (see Definition \ref{Scalar:Resolvent} below). 

The importance of knowing the variance of a stochastic process relies in the fact that allows measuring how far the set of random values are spread out from their average. Although \eqref{Var-0} provides an exact  representation of $\textrm{Var}[X(t)]$, in general the function $r_\eta$ cannot be computed explicitly. In consequence, get more analytical properties of the variance could be a hard task. For example, we are particularly interested in knowing how slow its growth rate can be. In this context, in \cite[Section 4]{Pozo-Vergara-2019-2} it has been proved that the asymptotic behavior of $\textrm{Var}[X(t)]$ could be of very different kinds, which are e.g., exponential, algebraic and logarithmic. To the best of our knowledge, the slowest growth rate of $\textrm{Var}[X(t)]$ known in the literature is logarithmic, and this rate is obtained considering 
\begin{equation}\label{k-l-log}
k(t)=\int_{0}^1 \dfrac{t^{\alpha-1}}{\Gamma(\alpha)}d\alpha,\quad \text{and}\quad\ell(t)=\int_0^\infty \dfrac{e^{-st}}{1+s}ds,\quad t>0.
\end{equation}

So a basic question arises: {\it Is there a pair $(k,\ell)\in(\mathcal{PC})$ such that the variance grows slower than a logarithmic function at infinity?} In this paper we show that the answer to this question is affirmative. Indeed, we develop a method to construct infinitely many examples of pairs $(k,\ell)\in(\mathcal{PC})$  answering affirmatively this question. These examples cover a broad range of slowly growing functions. 

The paper is organized as follows. In Section 2, we give some preliminaries concept that we need to our work. Section 3 is the central part of this work. In this section we prove the main results of our work. In Section 4, we explain why this method could be interesting in another contexts such as sub-diffusion processes.

\section{Preliminaries}
The Laplace transform of a subexponential function $f\colon [0,\infty)\to \mathbb{R}$ defined on the half-line will be denoted by
\[
\widehat{f}(\lambda)=\int_0^{\infty} e^{-\lambda t}f(t)dt,\quad \lambda>0.
\] 


\begin{definition}\label{Def:Mono}
An infinitely differentiable function $f\colon (0,\infty)\to\mathbb{R}$ is called {\it completely monotone} if $(-1)^n f^{(n)}(\lambda)\ge 0$ for all $n\in\mathbb{N}_0$ and $\lambda > 0$. An infinitely differentiable function $f\colon (0,\infty)\to\mathbb{R}$ is called {\it Bernstein function} if $f(\lambda)\ge0$ for $\lambda>0$ and $f'$ is a completely monotonic function. We will denote the class of completely monotonic functions by $(\mathcal{CM})$, and the class of Bernstein functions will be denoted by $(\mathcal{BF})$. 
\end{definition}

\noindent A detailed collection of the most important properties and results about the classes $(\mathcal{CM})$ and $(\mathcal{BF})$ can be found in  \cite[Chapter 3]{Jacob-1996} and \cite{Schilling-Song-Vondracek-2010}.

\begin{definition}\label{Scalar:Resolvent} Let $\eta\in\mathbb{C}$ and $\ell\in L_{1,loc}(\mathbb{R}_+)$. The solution $r_\eta \colon \mathbb{R}_+\to \mathbb{C}$ of the scalar Volterra equation 
\begin{equation}\label{Eq:r}
r_\eta(t) +\eta(r_\eta\ast \ell)(t)=\ell(t),\quad t>0,
\end{equation}
is called {\it integral scalar resolvent associated to $\ell$}.
\end{definition}

\noindent The integral scalar resolvent $r_\eta $ has proved to be essential for the treatment of non-homogeneous Volterra equations, see \cite{Clement-Nohel-1979,Clement-Nohel-1981,Gripenberg-Londen-Staffans-1990,Pozo-Vergara-2019} and references therein. 

\smallskip

We remark that if $\eta\in\mathbb{R}_+$ then the condition $(k,\ell)\in(\mathcal{PC})$ implies that $r_\eta$ is positive, see \cite[Proposition 4.5]{Pruss-1993}. Further, if additionally $\ell$ is completely monotonic then $r_\eta$ is completely monotonic as well, (see \cite[Lemma 4.1]{Pruss-1993}). More properties and results about $r_\eta $ can be found in \cite[Chapter 5]{Gripenberg-Londen-Staffans-1990}. 

Now we introduce a version of Karamata-Feller Tauberian theorem. The proof  can be found in \cite[Section 1.7, Chapter I]{Bingham-Goldie-Teugels-1989} or \cite[Chapter XIII]{Feller-1971}. 


\begin{definition}\label{Regularly-Varying-Functions}
Let $\varrho\in\mathbb{R}$. We say that a function $L:(0,\infty)\to (0,\infty)$ is a {\it regularly varying function of index $\varrho$} if for every fixed $x>0$ we have that 
\[
\lim_{t\to\infty }\dfrac{L(tx)}{L(t)}=x^\varrho.
\]
The regularly varying functions at infinity of index $\varrho=0$ are called {\it slowly varying functions}.  
\end{definition}

\begin{remark}\label{Remark:Regularly} Let $F\colon(0,\infty)\to(0,\infty)$ be a regularly varying function at infinity of index $\varrho$. It follows from \cite[Theorem 1.4.1]{Bingham-Goldie-Teugels-1989} that there is a slowly varying function $L\colon(0,\infty)\to(0,\infty)$ such that $F(x)=x^{\varrho}L(x)$ for $x>0$.
\end{remark}


\begin{theorem}[Karamata-Feller Theorem] \label{Karamata:Feller}
Let $L_1,L_2\colon(0,\infty)\to (0,\infty)$ slowly varying functions. Let $\beta>0$ and $w:(0,\infty)\rightarrow \mathbb{R}$ be a monotone function whose Laplace transform $\widehat{w}(\lambda)$ exists for all $\lambda\in \mathbb{C}_+$. Then
\[
\widehat{w}(\lambda) \sim \,\frac{1}{\lambda^\beta}\,L_1(\lambda),\ \text{as}\;\lambda\to \infty,\ \; \mbox{if and only if} \ \;
w(t)\sim \frac{t^{\beta-1}}{\Gamma(\beta)}L_1\left(\dfrac{1}{t}\right),\ \mbox{as}\;t\to 0^+,
\]
and 
\[
\widehat{w}(\lambda) \sim \,\frac{1}{\lambda^\beta}\,L_2\left(\frac{1}{\lambda}\right),\ \mbox{as}\ \lambda\to 0^+,\ \; \mbox{if and only if} \ \
w(t)\sim \frac{t^{\beta-1}}{\Gamma(\beta)}L_2(t),\ \mbox{as}\ t\to \infty.
\]
Here the approaches are on the positive real axis and the notation $f(t)\sim g(t)$ as $t\to t_*$ means that $\lim_{t\to t_*} f(t)/g(t) =1$. 
\end{theorem}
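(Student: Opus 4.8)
The plan is to prove each of the two stated equivalences by splitting it into its Abelian half (deducing the transform asymptotics from those of $w$) and its Tauberian half (the converse), and to treat the two regimes $t\to 0^+$/$\lambda\to\infty$ and $t\to\infty$/$\lambda\to 0^+$ in parallel, since both rest on the same substitution $s=\lambda t$ in the integral $\widehat{w}(\lambda)=\int_0^\infty e^{-\lambda t}w(t)\,dt=\lambda^{-1}\int_0^\infty e^{-s}w(s/\lambda)\,ds$. Letting $\lambda\to\infty$ probes $w$ near the origin (and produces the $L_1$ statement), while letting $\lambda\to 0^+$ probes $w$ near infinity (and produces the $L_2$ statement); the machinery is identical and only the direction of the limit changes.

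For the Abelian direction I would insert the assumed asymptotic equivalent of $w$ into the last integral and pass to the limit under the integral sign. The justification is the only delicate point here: I would appeal to the uniform convergence theorem for slowly varying functions to get pointwise convergence of the rescaled slowly varying factor, say $L_2(s/\lambda)/L_2(1/\lambda)\to 1$ for fixed $s$, and to Potter's bounds to produce integrable majorants of the form $e^{-s}s^{\beta-1\mp\varepsilon}$ near $s=0$ and near $s=\infty$, after which dominated convergence applies (see \cite{Bingham-Goldie-Teugels-1989}). Evaluating $\int_0^\infty e^{-s}s^{\beta-1}\,ds=\Gamma(\beta)$ then yields exactly the claimed transform asymptotic, the $\Gamma(\beta)$ cancelling the one in the statement.

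For the Tauberian direction, where the hypothesis that $w$ is monotone becomes essential, I would proceed by Karamata's method. Writing $U(t)=\int_0^t w(s)\,ds$, the transform $\widehat{w}(\lambda)$ is the Laplace--Stieltjes transform of $U$, so it suffices to prove the theorem at the level of the monotone function $U$ and then descend to $w=U'$ by the monotone density theorem, which is legitimate precisely because $w$ is monotone and converts the index-$\beta$ asymptotic $U(t)\sim \frac{t^\beta}{\Gamma(\beta+1)}L_2(t)$ into $w(t)\sim \frac{\beta t^{\beta-1}}{\Gamma(\beta+1)}L_2(t)=\frac{t^{\beta-1}}{\Gamma(\beta)}L_2(t)$, using $\Gamma(\beta+1)=\beta\Gamma(\beta)$. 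For the statement about $U$, I would introduce the rescaled measures $\mu_\lambda$ and use the transform hypothesis to show that $\int_0^\infty e^{-cs}\,d\mu_\lambda(s)$ converges, for every fixed $c>0$, to $c^{-\beta}=\frac{1}{\Gamma(\beta)}\int_0^\infty e^{-cs}s^{\beta-1}\,ds$; this reduces to computing the ratio $\widehat{w}(c\lambda)/\widehat{w}(\lambda)\to c^{-\beta}$, again via slow variation. An application of the Stone--Weierstrass theorem then extends this convergence from the family $\{e^{-cs}\}_{c>0}$ to all of $C_0[0,\infty)$.

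The main obstacle is the final step: upgrading this vague convergence, tested against continuous functions, to convergence of the distribution functions themselves, i.e. evaluation against the discontinuous indicator $\mathbf{1}_{[0,1]}$, whose value at $1$ reads off the regularly varying asymptotics of $U$. This is exactly where monotonicity is indispensable, since it supplies the one-sided control (tightness and equicontinuity) needed to squeeze the mass near the boundary point and to rule out the oscillation that the transform alone cannot detect. Once this control is in place, the monotone density theorem completes the argument, and the companion $t\to 0^+$/$\lambda\to\infty$ statement follows verbatim with the limit directions reversed and $L_1$ in place of $L_2$.
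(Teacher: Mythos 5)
The paper does not actually prove this theorem: it is quoted as a known result, with the proof delegated to the cited references (Bingham--Goldie--Teugels, Section 1.7, and Feller, Chapter XIII). So there is no in-paper argument to compare yours against; what you have written is an outline of precisely the proof found in those references. Your decomposition into an Abelian half (substitution $s=\lambda t$, uniform convergence theorem for slowly varying functions, Potter bounds, dominated convergence, $\int_0^\infty e^{-s}s^{\beta-1}\,ds=\Gamma(\beta)$) and a Tauberian half (Karamata's method with the exponential family $\{e^{-cs}\}$, Stone--Weierstrass, passage to indicators, then the monotone density theorem to descend from $U=1*w$ to $w$) is the standard and correct route, and the constants work out as you say via $\Gamma(\beta+1)=\beta\Gamma(\beta)$. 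Two points are glossed over and worth making explicit if you were to write this in full. First, in the Abelian step for $\lambda\to 0^+$, the hypothesis on $w$ only governs large $t$, so the contribution $\int_0^T e^{-\lambda t}w(t)\,dt$ from a fixed neighbourhood of the origin must be shown separately to be $o(\lambda^{-\beta}L_2(1/\lambda))$; this is immediate from local integrability and $\lambda^{-\beta}L_2(1/\lambda)\to\infty$, but it is a distinct step from the dominated-convergence argument on $[T,\infty)$. Second, you attribute the passage from vague convergence to convergence of distribution functions to the monotonicity of $w$; what that step really uses is only the positivity (one-signedness) of the measure $dU=w\,dt$, which allows indicators to be sandwiched between continuous test functions, the limit measure $\Gamma(\beta)^{-1}s^{\beta-1}ds$ having no atoms --- ``equicontinuity'' is not the right mechanism. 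The monotonicity of $w$ itself is consumed entirely by the monotone density theorem at the end. With those clarifications your sketch is a faithful account of the classical proof.
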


\section{Very slowly growing variances}

We begin this section recalling the result established in \cite[Theorem 1.1]{Pozo-Vergara-2019-2} which is the base of our research.
\begin{theorem}\label{Theo-PDF}Let $\eta,\nu$ positive constants and $(k,\ell)\in(\mathcal{PC})$. The variance of the process $X(t)$, whose density function coincides with the fundamental solution of \eqref{Eq-Non-Local-k-k}, satisfies the following Volterra equation
\begin{equation}
\textrm{Var}[X(t)]+\eta(\ell\ast \textrm{Var}[X(\cdot)])(t)=2\nu(1\ast\ell\ast\ell)(t),\quad t\ge 0.
\end{equation}
Further, $\textrm{Var}[X(t)]$  is positive and increasing on $(0,\infty)$ and it satisfies the formula
\begin{equation}\label{Representation:Var}
\textrm{Var}[X(t)]=2\nu(1\ast\ell\ast r_\eta)(t),\quad t\ge 0.
\end{equation}
\end{theorem}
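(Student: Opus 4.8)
The plan is to establish the Volterra equation first, then deduce the representation formula from it using the resolvent.

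\medskip

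\noindent\textbf{Deriving the Volterra equation.}
First I would recall from \cite{Pozo-Vergara-2019-2} that the fundamental solution $U(t,z)$ of \eqref{Eq-Non-Local-k-k}-\eqref{Initial-Conditions} is a probability density, so that $\textrm{Var}[X(t)]=\int_{\mathbb{R}} z^2\, U(t,z)\,dz$ (the mean being zero by the symmetry $U(t,z)=U(t,-z)$, which follows from the evenness of \eqref{Eq-Non-Local-k-k} in $z$). The strategy is to take the second moment of the equation \eqref{Eq-Non-Local-k-k}. Convolving \eqref{Eq-Non-Local-k-k} with $\ell\ast\ell$ and using $(k,\ell)\in(\mathcal{PC})$, i.e. $k\ast\ell=1$, the operator $\partial_t^2(k\ast k\ast u)$ becomes (after two integrations) $u$ minus the contributions of the initial data, while $\eta\,\partial_t(k\ast u)$ becomes $\eta\,(\ell\ast u)$ up to initial terms. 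Concretely I would multiply the resulting identity by $z^2$ and integrate over $\mathbb{R}$, integrating by parts twice in the spatial term: $\int_{\mathbb{R}} z^2\,\partial_z^2 u\,dz = 2\int_{\mathbb{R}} u\,dz = 2$, since $U$ integrates to $1$ for each $t$. The initial conditions \eqref{Initial-Conditions} kill the boundary-in-time terms (the $\delta_0$ contributes nothing to the second moment at $t=0$, and $\partial_t u(0,\cdot)=0$). Collecting the pieces should yield exactly
\[
\textrm{Var}[X(t)]+\eta(\ell\ast\textrm{Var}[X(\cdot)])(t)=2\nu(1\ast\ell\ast\ell)(t).
\]

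\medskip

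\noindent\textbf{Solving via the resolvent.}
Once the Volterra equation is in hand, the representation \eqref{Representation:Var} follows by the variation-of-parameters formula for scalar Volterra equations. Writing $V(t)=\textrm{Var}[X(t)]$ and $f(t)=2\nu(1\ast\ell\ast\ell)(t)$, the equation reads $V+\eta(\ell\ast V)=f$. By Definition \ref{Scalar:Resolvent}, the integral scalar resolvent $r_\eta$ solves $r_\eta+\eta(r_\eta\ast\ell)=\ell$, and the standard resolvent formula gives the solution as $V=\frac{1}{\eta}\big(f-r_\eta\ast f\big)\cdot$(something) — more cleanly, I would verify directly that $V=2\nu(1\ast\ell\ast r_\eta)$ satisfies the equation. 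Substituting, $V+\eta(\ell\ast V)=2\nu\,1\ast\big(\ell\ast r_\eta+\eta\,\ell\ast\ell\ast r_\eta\big)=2\nu\,1\ast\ell\ast(r_\eta+\eta\,\ell\ast r_\eta)=2\nu\,1\ast\ell\ast\ell=f$, using the resolvent equation $r_\eta+\eta(\ell\ast r_\eta)=\ell$ in the last step. Uniqueness of solutions to the linear Volterra equation (guaranteed since $\ell\in L_{1,loc}$) then forces $V=2\nu(1\ast\ell\ast r_\eta)$.

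\medskip

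\noindent\textbf{Positivity and monotonicity.}
Finally I would establish that $V$ is positive and increasing. Since $\eta\in\mathbb{R}_+$ and $(k,\ell)\in(\mathcal{PC})$, the resolvent $r_\eta$ is nonnegative by \cite[Proposition 4.5]{Pruss-1993}. As $\ell\ge 0$ as well (being a $(\mathcal{PC})$-kernel, hence nonnegative), the convolution $1\ast\ell\ast r_\eta$ is nonnegative, giving $V\ge 0$; strict positivity on $(0,\infty)$ follows because $\ell$ and $r_\eta$ are not identically zero. For monotonicity, I would differentiate \eqref{Representation:Var}: $V'(t)=2\nu(\ell\ast r_\eta)(t)\ge 0$, since the outer convolution with $1$ is precisely integration in time, so differentiation removes it and leaves a convolution of two nonnegative functions. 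I expect the main obstacle to be the first part: carefully justifying the manipulation of \eqref{Eq-Non-Local-k-k} in the distributional/tempered sense and the interchange of the spatial integration with the non-local time operators, together with a rigorous treatment of the initial data when the classical derivative $\partial_t u(0,\cdot)$ may fail to exist. The resolvent computation and the positivity arguments, by contrast, are direct consequences of the cited results and elementary properties of convolutions.
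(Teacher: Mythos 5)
The paper does not actually prove Theorem \ref{Theo-PDF}: it is recalled verbatim from \cite[Theorem 1.1]{Pozo-Vergara-2019-2}, so there is no internal proof to compare yours against line by line. Judged on its own, your reconstruction is sound and matches the standard derivation in that reference. The second and third parts are complete and correct: the verification that $V=2\nu(1\ast\ell\ast r_\eta)$ solves $V+\eta(\ell\ast V)=2\nu(1\ast\ell\ast\ell)$ via the resolvent identity \eqref{Eq:r}, the appeal to uniqueness for linear Volterra equations, the positivity of $r_\eta$ from \cite[Proposition 4.5]{Pruss-1993}, and the monotonicity from $V'=2\nu(\ell\ast r_\eta)\ge 0$ are all exactly the right arguments (one small point: the definition of $(\mathcal{PC})$ only asserts nonnegativity of $k$, so the nonnegativity of $\ell$ is a consequence of the theory rather than part of the hypothesis, but it does hold). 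The first part is where the real work lives, and there your argument is a formal sketch: multiplying \eqref{Eq-Non-Local-k-k} by $z^2$ and integrating presupposes that $U(t,\cdot)$ exists, is a probability density with finite second moment, and decays well enough to integrate by parts, and the cancellation of the time-boundary terms under \eqref{Initial-Conditions} when $\partial_t u(0,\cdot)$ need not exist classically requires the careful functional-analytic setup (Fourier transform in $z$ reducing to a scalar Volterra equation for $\widehat{u}(t,\xi)$, then differentiating twice in $\xi$ at $\xi=0$) that \cite{Pozo-Vergara-2019-2} carries out. You flag this yourself, which is the honest assessment: your convolution algebra leading to $m+\eta(\ell\ast m)=2\nu(1\ast\ell\ast\ell)$ is correct, but the justification of those manipulations is precisely the content delegated to the cited work.
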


\noindent Since \eqref{Representation:Var} is given by means of convolutions, we can use Theorem \ref{Karamata:Feller} to study the behavior of $\mathrm{Var}[X(t)]$ at large times and short times. The following theorem is the main results of our work.


\begin{theorem}\label{Theo-Asymptotic-Behavior}Let $(k,\ell)\in(\mathcal{PC})$. If the Laplace transform $\widehat{\ell}$ is a regularly varying function of index $\varrho_1<\frac{1}{2}$, then 
\begin{equation}\label{Var->0}
\mathrm{Var}[X(t)]\sim \dfrac{2\nu}{\Gamma(1-2\varrho_1)}\left( \widehat{\ell}\bigl(t^{-1}\bigr)\right)^2,\ \text{as}\ t\to0^+.
\end{equation}
Further, if the function $t\mapsto\widehat{\ell}(t^{-1})$ is a regularly varying function of index $\varrho_2>-1$, then 
\begin{equation}\label{Var->00}
\mathrm{Var}[X(t)]\sim \dfrac{2\nu}{\eta\,\Gamma(1+\varrho_2)}\,\widehat{\ell}\Bigl(\frac{1}{t}\Bigr),\quad \text{as}\quad t\to \infty.
\end{equation}
\end{theorem}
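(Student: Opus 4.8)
The plan is to pass to the Laplace transform and reduce both limits to a single application of the Karamata--Feller Theorem \ref{Karamata:Feller}. First I would take the Laplace transform of the representation \eqref{Representation:Var}. Since the transform turns convolutions into products and $\widehat{1}(\lambda)=\lambda^{-1}$, this gives $\widehat{\mathrm{Var}[X]}(\lambda)=2\nu\lambda^{-1}\widehat{\ell}(\lambda)\,\widehat{r_\eta}(\lambda)$. Next I would transform the defining equation \eqref{Eq:r} of the resolvent, obtaining $\widehat{r_\eta}(\lambda)\bigl(1+\eta\widehat{\ell}(\lambda)\bigr)=\widehat{\ell}(\lambda)$, hence $\widehat{r_\eta}(\lambda)=\widehat{\ell}(\lambda)/\bigl(1+\eta\widehat{\ell}(\lambda)\bigr)$. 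Substituting yields the single clean identity
\[
\widehat{\mathrm{Var}[X]}(\lambda)=\frac{2\nu}{\lambda}\,\frac{\bigl(\widehat{\ell}(\lambda)\bigr)^2}{1+\eta\,\widehat{\ell}(\lambda)},
\]
from which both asymptotic regimes will be read off. Throughout, the crucial hypothesis of Theorem \ref{Karamata:Feller} is the monotonicity of the function whose asymptotics we seek; this is supplied for free since $\mathrm{Var}[X]$ is increasing by Theorem \ref{Theo-PDF}.

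For the short-time limit \eqref{Var->0} I would let $\lambda\to\infty$. Because $\ell\in L_{1,loc}(\mathbb{R}_+)$ possesses a Laplace transform, a routine dominated-convergence argument gives $\widehat{\ell}(\lambda)\to0$, so the denominator tends to $1$ and $\widehat{\mathrm{Var}[X]}(\lambda)\sim 2\nu\lambda^{-1}\bigl(\widehat{\ell}(\lambda)\bigr)^2$. Writing $\widehat{\ell}(\lambda)=\lambda^{\varrho_1}L(\lambda)$ with $L$ slowly varying (Remark \ref{Remark:Regularly}), the right-hand side equals $\lambda^{-(1-2\varrho_1)}\cdot 2\nu L(\lambda)^2$, which is of the form $\lambda^{-\beta}L_1(\lambda)$ with $\beta=1-2\varrho_1>0$ (here $\varrho_1<\frac{1}{2}$ is exactly what guarantees $\beta>0$) and $L_1=2\nu L^2$ slowly varying. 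The first part of Theorem \ref{Karamata:Feller} then gives $\mathrm{Var}[X(t)]\sim \frac{t^{\beta-1}}{\Gamma(\beta)}L_1(1/t)$ as $t\to0^+$; rewriting $t^{-2\varrho_1}L(1/t)^2=\bigl(\widehat{\ell}(1/t)\bigr)^2$ produces \eqref{Var->0}.

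For the large-time limit \eqref{Var->00} I would instead let $\lambda\to0^+$, where the relevant regime is $\widehat{\ell}(\lambda)\to\infty$, so that $1+\eta\widehat{\ell}(\lambda)\sim\eta\,\widehat{\ell}(\lambda)$ and $\widehat{\mathrm{Var}[X]}(\lambda)\sim \frac{2\nu}{\eta}\,\lambda^{-1}\widehat{\ell}(\lambda)$. Setting $\lambda=1/s$ and using that $s\mapsto\widehat{\ell}(1/s)$ is regularly varying of index $\varrho_2$, say $\widehat{\ell}(1/s)=s^{\varrho_2}M(s)$ with $M$ slowly varying, one finds $\lambda^{-1}\widehat{\ell}(\lambda)=s^{1+\varrho_2}M(s)=\lambda^{-(1+\varrho_2)}M(1/\lambda)$, i.e. $\widehat{\mathrm{Var}[X]}(\lambda)\sim\lambda^{-\beta}L_2(1/\lambda)$ with $\beta=1+\varrho_2>0$ and $L_2=\frac{2\nu}{\eta}M$. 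The second part of Theorem \ref{Karamata:Feller} yields $\mathrm{Var}[X(t)]\sim\frac{t^{\beta-1}}{\Gamma(\beta)}L_2(t)=\frac{2\nu}{\eta\,\Gamma(1+\varrho_2)}\,t^{\varrho_2}M(t)$, which is precisely \eqref{Var->00}. The main delicate point in both halves is controlling the denominator $1+\eta\widehat{\ell}$: one must confirm the limiting behaviour of $\widehat{\ell}$ at $\infty$ and at $0^+$ (exploiting that $(k,\ell)\in(\mathcal{PC})$, e.g. $\widehat{\ell}(\lambda)=1/\bigl(\lambda\widehat{k}(\lambda)\bigr)$ together with $k$ nonincreasing and nonnegative), and then verify that the squares, products and quotients of the regularly varying functions involved remain regularly varying before Theorem \ref{Karamata:Feller} can be invoked.
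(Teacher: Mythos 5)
Your proposal is correct and follows essentially the same route as the paper: the identical Laplace-transform identity $\widehat{\mathrm{Var}[X]}(\lambda)=\frac{2\nu}{\lambda}\cdot\frac{\bigl(\widehat{\ell}(\lambda)\bigr)^2}{1+\eta\,\widehat{\ell}(\lambda)}$, the same two regimes $\lambda\to\infty$ and $\lambda\to0^+$, and the same single application of the Karamata--Feller theorem in each, using Remark \ref{Remark:Regularly} to extract the slowly varying factors. The one point you flag but do not fully close --- that $\widehat{\ell}(\lambda)\to\infty$ as $\lambda\to0^+$ so that the denominator is asymptotic to $\eta\,\widehat{\ell}(\lambda)$ --- is left equally implicit in the paper's own argument.
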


\begin{proof}
Set $V(t)=\mathrm{Var}[X(t)]$ for $t\ge 0$. It follows from \eqref{Representation:Var} that the Laplace transform of $V$ is given by  
\[
\widehat{V}(\lambda)=\dfrac{2\nu}{\lambda}\cdot \dfrac{\widehat{\ell}(\lambda)}{1+\eta\widehat{\ell}(\lambda)}\cdot \widehat{\ell}(\lambda),\quad \lambda>0.
\]
Since $\widehat{\ell}(\lambda)\to0$ as $\lambda\to \infty$, we have that 
\[
\widehat{V}(\lambda)\sim\dfrac{2\nu}{\lambda^{1-2\varrho_1}} L_1(\lambda),\quad \text{as}\quad\lambda\to\infty,
\]
where $L_1(t)=t^{-2\varrho_1}\bigl(\widehat{\ell}(t)\bigr)^2$. Since $\widehat{\ell}$ is a regular variation function of index $\varrho_1$, by Remark \ref{Remark:Regularly}, we have that $L_1$ is a slowly varying function. Since $\frac{1}{2}>\varrho_1$, it follows from Theorem \ref{Karamata:Feller} that 
\[
\mathrm{Var}[X(t)]\sim \dfrac{2\nu}{\Gamma(1-2\varrho_1)}\left(\widehat{\ell}\bigl(t^{-1}\bigr)\right)^2,\ \text{as}\ t\to0^+.
\]
On the other hand, we note that
\[
\widehat{V}(\lambda)=\dfrac{2\nu}{\lambda^{1+\varrho_2}}L_2\left(\dfrac{1}{\lambda}\right),\quad \lambda>0.
\]
where $L_2\colon(0,\infty)\to(0,\infty)$ is defined by 
\[
L_2(t)=\frac{\widehat{\ell}(t^{-1})}{1+\eta\widehat{\ell}(t^{-1})}\cdot \frac{\widehat{\ell}(t^{-1})}{t^{\varrho_2}},\quad\text{for}\quad t>0.
\] 
Since $t\mapsto\widehat{\ell}(t^{-1})$ is a regularly varying function of index $\varrho_2$, by Remark \ref{Remark:Regularly} we have that $L_2$ is a slowly varying function. Furthermore, 
\[
L_2(\lambda^{-1})\sim \frac{1}{\eta\lambda^{\varrho_2}}\widehat{\ell}(\lambda^{-1}) \quad\text{as} \quad\lambda\to0^+.
\] 
Since $\varrho_2>-1$, it follows from Theorem \ref{Karamata:Feller} that 
\[
\mathrm{Var}[X(t)]\sim \dfrac{2\nu}{\eta}\cdot\dfrac{t^{\varrho_2}}{\Gamma(1+\varrho_2)} L_2(t)\sim \dfrac{2\nu}{\eta\,\Gamma(1+\varrho_2)}\,\widehat{\ell}\Bigl(\frac{1}{t}\Bigr),\quad \text{as}\quad t\to \infty. 
\]
\end{proof}
\noindent We remark that the conditions of Theorem \ref{Theo-Asymptotic-Behavior} are satisfied for many pairs of functions $(k,\ell)\in(\mathcal{PC})$. For instance, all the examples presented in \cite[Section 5]{Pozo-Vergara-2019-2} satisfy these conditions. For the sake of brevity of the text we omit their proof. 


\begin{example}\label{Example-1}{\it (Time fractional telegraph equation)}. If $k=g_{1-\alpha}$ with $\alpha\in(0,1)$, then 
\[
\textrm{Var}[X(t)]\sim \dfrac{2\nu}{\eta}\dfrac{t^\alpha}{\Gamma(1+\alpha)}, \quad \text{as}\quad t\to \infty,
\]
and 
\[
\textrm{Var}[X(t)]\sim 2\nu\,\dfrac{t^{2\alpha}}{\Gamma(1+2\alpha)}, \quad \text{as}\quad t\to 0^+.
\]
\end{example}


\begin{example}\label{Example-2}{\it (Sum of two time fractional derivatives)}. If $k=g_{1-\alpha}+g_{1-\beta}$ with $0<\alpha<\beta<1$, then 
\[
\textrm{Var}[X(t)]\sim \dfrac{2\nu}{\eta}\dfrac{t^\alpha}{\Gamma(1+\alpha)}, \quad \text{as}\quad t\to \infty,
\]
and 
\[
\textrm{Var}[X(t)]\sim 2\nu\,\dfrac{t^{2\beta}}{\Gamma(1+2\beta)}, \quad \text{as}\quad t\to 0^+.
\]
\end{example}


\begin{example}\label{Example-3}{\it (Time-fractional telegraph equation with Mittag-Leffler weight)}. If $k(t)=t^{\beta-1}E_{\alpha,\beta}(-\omega t^{\alpha})$ with $0<\alpha,\beta<1$ and $\omega>0$, then 
\[
\textrm{Var}[X(t)]\sim \dfrac{2\nu}{\eta}\dfrac{\omega\, t^{\alpha+1-\beta}}{\Gamma(2+\alpha-\beta)}, \quad \text{as}\quad t\to \infty,
\]
and 
\[
\textrm{Var}[X(t)]\sim 2\nu\,\dfrac{t^{2-2\beta}}{\Gamma(2+\alpha-\beta)}, \quad \text{as}\quad t\to 0^+.
\]

\end{example}

\begin{example}\label{Example-4}{\it (Time distributed order telegraph equation)}. If $\displaystyle k(t)=\int_a^b g_{\alpha}(t)d\alpha$, with $0\le a<b\le 1$, then  
\[
\textrm{Var}[X(t)]\sim \dfrac{2\nu}{\eta}\dfrac{t^{1-b}\log(t)}{\Gamma(2-b)}, \quad \text{as}\quad t\to \infty,
\]
and 
\[
\textrm{Var}[X(t)]\sim 2\nu\,\dfrac{t^{1+b-2a}\bigl(\log(t)\bigr)^2}{\Gamma(2-b)}, \quad \text{as}\quad t\to 0^+.
\]
\end{example}

\begin{remark} We note that if $b=1$ and $a\in(0,1)$ in Example \ref{Example-4} then we have an  infinity family of processes whose variance behaves like a logarithmic function at infinity. 
\end{remark}


The following example has not been established in the literature before. In order to present it, we define recursively the functions $\Theta_{n}$ as follows 
\begin{equation}\label{Theta:n}
\Theta_{1}(t,x)=\int_0^x g_{\alpha}(t)d\alpha\quad \text{and}\quad\Theta_{n+1}(t,x)=\int_{0}^{x} \Theta_{n}(t,y)dy,\quad t>0, \ x>0.
\end{equation}
Further, for $n\in\mathbb{N}$ we define the functions $\theta_n\colon(0,\infty)\to (0,\infty)$ by
\begin{equation}\label{theta:n}
\theta_n(t)=\Theta_{n}(t,1),\ \text{for}\ t>0.
\end{equation}

\begin{example}\label{Example-5}Let $n\in\mathbb{N}$. Consider $\displaystyle k=\theta_n$ where $\theta_n$ has been defined in \eqref{theta:n}, then  
\[
\textrm{Var}[X(t)]\sim \dfrac{2\nu}{\eta}\bigl(\log(t)\bigr)^n, \quad \text{as}\quad t\to \infty,
\]
and 
\[
\textrm{Var}[X(t)]\sim \nu \big((n-1)!\bigr)^2\bigl(t\cdot \log(t)\bigr)^2, \quad \text{as}\quad t\to 0^+.
\]

\end{example}

\begin{proof}Let $n\in\mathbb{N}$. We note that $\theta_n$ is positive and locally integrable on $\mathbb{R}_+$. Since $\alpha\in(0,1)$ we have that $g_\alpha\in(\mathcal{CM})$. We recall that the class of completely monotonic functions is closed under addition and pointwise limits, see \cite[Corollary 1.6 and Corollary 1.7]{Schilling-Song-Vondracek-2010}. Therefore, we have that $\theta_{n}\in(\mathcal{CM})$ as well. It follows from \cite[Theorem 5.4 and Theorem 5.5]{Gripenberg-Londen-Staffans-1990} that there exists $\zeta_{n}\in(\mathcal{CM})$ such that 
\begin{equation}\label{zeta-n}
\theta_n\ast\zeta_{n}=1. 
\end{equation}
Consequently, we have that $(\theta_n,\zeta_n)\in(\mathcal{PC})$. Further, for $\lambda>0$ and $x>0$ we note that $\widehat{\Theta}_1(\lambda,x)=\dfrac{\lambda^x-1}{\lambda^x\log(\lambda)}$. Hence, it follows from \eqref{Theta:n} and an inductive procedure that 
\begin{equation*}
\widehat{\Theta}_{n}(\lambda,x)=\dfrac{1}{\lambda^x\bigl(\log(\lambda)\bigr)^n}\left((-1)^n+\lambda^x\sum_{k=1}^n \dfrac{(-1)^{k-1}x^{n-k}(\log(\lambda))^{n-k}}{(n-k)!}\right), \ \lambda>0, \ x>0,
\end{equation*}
for $n\ge 2$. This in turn implies that 
\[
\widehat{\theta}_n(\lambda)=\dfrac{1}{\lambda\bigl(\log(\lambda)\bigr)^n}\left((-1)^n+\lambda\sum_{k=1}^n \dfrac{(-1)^{k-1}(\log(\lambda))^{n-k}}{(n-k)!}\right), \quad \lambda>0,
\]
and
\[
\widehat{\zeta}_n(\lambda)=\dfrac{\bigl(\log(\lambda)\bigr)^n}{\displaystyle (-1)^n+\lambda\sum_{k=1}^n \dfrac{(-1)^{k-1}(\log(\lambda))^{n-k}}{(n-k)!}},\quad \lambda>0,\ 
\]
for $n\ge 2$. We note that $\widehat{\zeta}_n(\lambda)$ can be rewritten as follows 
\[
\widehat{\zeta}_n(\lambda)=\dfrac{\log(\lambda)}{\displaystyle \frac{(-1)^n}{\big(\log(\lambda)\big)^{n-1}}+\frac{\lambda}{(n-1)!}+\lambda\sum_{k=2}^n \dfrac{(-1)^{k-1}}{(n-k)!(\log(\lambda))^{k-1}}},\quad \lambda>0
\]
Hence, $\widehat{\zeta}_n(\lambda)\sim \frac{(n-1)!\log(\lambda)}{\lambda}$ as $\lambda\to \infty$. This implies that $\widehat{\zeta}_n$ is a regularly varying function of index $\varrho=-1$. On the other hand, we have that
\[
\widehat{\zeta}_n(t^{-1})=\dfrac{(-1)^{n}\big(\log(t)\big)^n}{\displaystyle (-1)^n+\frac{(-1)^{n-1}}{t}\sum_{k=1}^n \frac{\big(\log(t)\big)^{n-k}}{(n-k)!}},\quad t>0,
\]
which implies that $\widehat{\zeta}_n(t^{-1})\sim \big(\log(t)\big)^n$ as $t\to\infty$. Since the logarithmic function is regularly varying, and the multiplication of regularly varying functions is regularly varying as well, this in turn implies that the function $t\mapsto \zeta_n(t^{-1})$ is a regularly varying function. Moreover, 
\[
\widehat{\zeta}_n(t^{-1})=\dfrac{(-1)^{n}\,t\log(t)}{\displaystyle \frac{(-1)^n\, t}{(\log(t))^{n-1}}+\frac{(-1)^{n-1}}{(n-1)!}+\sum_{k=2}^n \frac{1}{(n-k)!\,\big(\log(t)\big)^{k-1}}},\quad t>0,
\]
which implies that $\widehat{\zeta}_n(t^{-1})\sim (-1)(n-1)!\, t\log(t)$ as $t\to 0^+$. Consequently, as direct application of Theorem \ref{Theo-Asymptotic-Behavior} we obtain that 
\[
\textrm{Var}[X(t)]\sim \dfrac{2\nu}{\eta}\bigl(\log(t)\bigr)^n, \quad \text{as}\quad t\to \infty,
\]
and 
\[
\textrm{Var}[X(t)]\sim \nu \big((n-1)!\bigr)^2(\bigl(t\cdot \log(t)\bigr)^2, \quad \text{as}\quad t\to 0^+.
\]
\end{proof}


\noindent We remark that all the examples above are very interesting, However, as we have mentioned in the Introduction, we are interested into find examples of $(k,\ell)\in(\mathcal{PC})$ such that $\textrm{Var}[X(t)]$ grows slower than a logarithmic function at infinity. To this end we prove the following result.

\begin{lemma}\label{Lemma-Comp-f-g} Let $f,g\in L_{1,loc}(\mathbb{R}_+)$. Assume that $f,g\in(\mathcal{CM})$, then there exists $h\in(\mathcal{CM})$ such that 
\[
\widehat{h}(\lambda)=\widehat{f}(\lambda)\ \widehat{g}\bigl(\widehat{f}(\lambda)\bigr), \quad \lambda>0.
\]
\end{lemma}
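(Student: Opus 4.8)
The plan is to move everything to the Laplace side and show that the candidate transform $\widehat{h}(\lambda):=\widehat{f}(\lambda)\,\widehat{g}(\widehat{f}(\lambda))$ is a \emph{Stieltjes function} vanishing at infinity; such functions are exactly the Laplace transforms of completely monotone functions, so inverting produces the desired $h\in(\mathcal{CM})$. The tools I would draw on are all in \cite{Schilling-Song-Vondracek-2010}: (i) the Laplace transform of a function in $(\mathcal{CM})$ is a Stieltjes function, i.e. of the form $c_0/\lambda + c_1 + \int_{(0,\infty)}(\lambda+s)^{-1}\sigma(ds)$ with $c_0,c_1\ge 0$; (ii) a nonzero positive function $s$ is Stieltjes if and only if $1/s$ is a complete Bernstein function (a Bernstein function in the sense of Definition \ref{Def:Mono} with the extra complete-monotonicity of its representing density); and (iii) the Stieltjes functions form a convex cone closed under pointwise limits, hence under positive integral superpositions.

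First I would record that $\widehat{f}$ and $\widehat{g}$ are Stieltjes by (i), with the refinement that their constant terms $c_1$ vanish: since $f,g\in L_{1,loc}(\mathbb{R}_+)$ their transforms are finite and $\widehat{f}(\lambda),\widehat{g}(\lambda)\to 0$ as $\lambda\to\infty$. Writing the representation $\widehat{g}(w)=a/w+\int_{(0,\infty)}(w+s)^{-1}\sigma(ds)$, substituting $w=\widehat{f}(\lambda)$ and multiplying by $\widehat{f}(\lambda)$ gives
\[
\widehat{h}(\lambda)=a+\int_{(0,\infty)}\frac{\widehat{f}(\lambda)}{\widehat{f}(\lambda)+s}\,\sigma(ds).
\]
The key observation is that each integrand is Stieltjes in $\lambda$: setting $\phi:=1/\widehat{f}$, which is a complete Bernstein function by (ii), one has $\widehat{f}(\lambda)/(\widehat{f}(\lambda)+s)=1/(1+s\,\phi(\lambda))$, and $1+s\,\phi$ is again a complete Bernstein function (a nonnegative constant plus a positive multiple of one), so its reciprocal is Stieltjes by (ii) applied in reverse. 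By (iii) the integral is then Stieltjes, and adding the constant $a$ preserves this; hence $\widehat{h}$ is a Stieltjes function.

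It remains to invert. A Stieltjes function is the Laplace transform of a completely monotone function precisely when its value at infinity is zero, and here $\lim_{\lambda\to\infty}\widehat{h}(\lambda)=a=\lim_{t\to\infty}g(t)$, the mass the representing measure of $g$ places at the origin. For the kernels to which the lemma is applied this limit is $0$, the constant term disappears, and reading off the Stieltjes measure $\tau$ of $\widehat{h}$ yields the completely monotone function $h(t)=\tau(\{0\})+\int_{(0,\infty)}e^{-ts}\,\tau(ds)$ with the required transform. The part needing most care, and which I expect to be the main obstacle, is exactly this last step together with the interchange of limit and integral used to identify the constant at infinity: one must check that the superposition integral converges (it equals $\widehat{h}(\lambda)-a$), justify closure of the Stieltjes cone under it, and separate off the Dirac component $a\,\delta_0$ in the degenerate case $\lim_{t\to\infty}g(t)>0$, where no genuine function $h$ can satisfy the stated identity. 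A secondary, routine point is to quote the characterizations (i)--(ii) cleanly from \cite{Schilling-Song-Vondracek-2010} (or the equivalent statements behind \cite[Chapter 5]{Gripenberg-Londen-Staffans-1990}) rather than re-deriving the underlying Bernstein/Stieltjes representations.
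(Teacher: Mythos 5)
Your argument is correct in substance but follows a genuinely different route from the paper. The paper's proof is a two-line reduction to a ready-made subordination result: it sets $a=1\ast f$, $b\equiv 1$, $c=1\ast g$, invokes \cite[Lemma 4.3]{Pruss-1993} to obtain a Bernstein function $e$ with $e(0^+)=0$ and $\widehat{e}(\lambda)=\widehat{a}(\lambda)\,\widehat{dc}\bigl(\widehat{a}(\lambda)/\widehat{b}(\lambda)\bigr)$, and then takes $h=e'$, so that $\widehat{h}(\lambda)=\lambda\widehat{e}(\lambda)-e(0^+)=\widehat{f}(\lambda)\,\widehat{g}(\widehat{f}(\lambda))$ with $h\in(\mathcal{CM})$ because derivatives of Bernstein functions are completely monotone. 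You instead reprove the substance of that subordination lemma from scratch inside the Stieltjes/complete-Bernstein calculus of \cite{Schilling-Song-Vondracek-2010}: expand $\widehat{g}$ as a Stieltjes function, substitute $w=\widehat{f}(\lambda)$, recognize each integrand $1/(1+s\,\phi(\lambda))$ with $\phi=1/\widehat{f}$ as Stieltjes, and close the cone under the superposition. Both arguments rest on the same structural fact (transforms of completely monotone kernels are Stieltjes and the class is stable under this composition), so neither is more general in spirit; what your route buys is self-containedness and, more interestingly, it exposes an implicit hypothesis that the paper's statement glosses over: the constant $a=\lim_{t\to\infty}g(t)$ survives as the value of $\widehat{h}$ at infinity, and when $a>0$ (e.g.\ $g\equiv 1$) no locally integrable $h$ can have that transform, so the lemma as literally stated fails and the correct conclusion is $h=a\,\delta_0+(\text{a completely monotone function})$. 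You correctly note that $a=0$ in every application in the paper (there $g=g_{1-\delta}$ with $\delta\in(0,1)$), so nothing downstream is affected, but your version makes the needed caveat visible where the paper's citation of Pr\"uss hides it. The only points you should still pin down to make the write-up airtight are the routine ones you already flag: the finiteness of the superposition integral (immediate, since it equals $\widehat{h}(\lambda)-a$ and $\widehat{f}(\lambda),\widehat{g}$ are finite) and the closure of the Stieltjes cone under pointwise limits, both quotable from \cite{Schilling-Song-Vondracek-2010}.
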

\begin{proof}Consider $a=1\ast f$, $b\equiv 1$, $c=1\ast g$. It is clear that $a,b$ and $c$ are Bernstein functions. According to \cite[Lemma 4.3]{Pruss-1993}, there exists a Bernstein function $e\colon(0,\infty)\to (0,\infty)$ such that $e(0^+)=a(0^+)=0$ and
\[
\widehat{e}(\lambda)=\widehat{a}(\lambda)\widehat{dc}\left(\dfrac{\widehat{a}(\lambda)}{\widehat{b}(\lambda)}\right),\ \lambda>0,
\]
where $\widehat{dc}$ stands for the Laplace transform of the measure $dc$ which is defined by
\[
\widehat{dc}(\lambda)=\int_0^\infty e^{-\lambda t}dc(t),\quad \lambda>0.
\]
Since $c=1\ast g$, we have that $dc(t)=g(t)dt$, and consequently $\widehat{dc}=\widehat{g}$. Let us now define $h(t)=\frac{d}{dt}e(t)$ for $t>0$. Since $e\in(\mathcal{BF})$ we have that $h\in(\mathcal{CM})$ and  
\[
\widehat{h}(\lambda)=\lambda \widehat{e}(\lambda) -e(0^+)=\widehat{f}(\lambda)\ \widehat{g}\bigl(\widehat{f}(\lambda)\bigr), \quad \lambda>0.
\]
\end{proof}


\begin{corollary}\label{Coro-phi-delta}For all $\delta\in(0,1]$ there exists a pair $(\phi^\delta_1,\psi^\delta_1)\in(\mathcal{PC})$ such that 
\[
\widehat{\psi^\delta_1}\bigl(t^{-1}\bigr)\sim \bigl(\log(t)\bigr)^\delta,\quad \text{as}\quad t\to \infty,
\] 
and
\[
\widehat{\psi^\delta_1}\bigl(t^{-1}\bigr)\sim (t\cdot \log(t^{-1})\bigr)^{\delta}\quad \text{as}\quad t\to 0^+.
\]
If $\delta=1$, we will simply write $(\phi_1,\psi_1)\in(\mathcal{PC})$.
\end{corollary}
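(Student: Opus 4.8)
The plan is to build $\psi^\delta_1$ directly from the completely monotone complement $\zeta_1$ of Example \ref{Example-5} (taken with $n=1$) by feeding it into the composition Lemma \ref{Lemma-Comp-f-g}, and then to produce the partner $\phi^\delta_1$ by the same complementation argument used there. When $\delta=1$ there is nothing to construct: the pair $(\theta_1,\zeta_1)$ from Example \ref{Example-5} already satisfies $\widehat{\zeta}_1(t^{-1})\sim\log t$ as $t\to\infty$ and $\widehat{\zeta}_1(t^{-1})\sim t\log(t^{-1})$ as $t\to 0^+$, which are exactly the required asymptotics, so I would set $(\phi_1,\psi_1):=(\theta_1,\zeta_1)$. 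Assume from now on $\delta\in(0,1)$.

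First I would apply Lemma \ref{Lemma-Comp-f-g} with $f=\zeta_1$ and $g=g_{1-\delta}$. Both are admissible: $\zeta_1\in(\mathcal{CM})\cap L_{1,loc}(\mathbb{R}_+)$ by Example \ref{Example-5}, while $g_{1-\delta}(t)=t^{-\delta}/\Gamma(1-\delta)$ is completely monotone and locally integrable precisely because $\delta\in(0,1)$. The Lemma then yields $\psi^\delta_1:=h\in(\mathcal{CM})$ with
\[
\widehat{\psi^\delta_1}(\lambda)=\widehat{\zeta}_1(\lambda)\,\widehat{g_{1-\delta}}\bigl(\widehat{\zeta}_1(\lambda)\bigr)=\widehat{\zeta}_1(\lambda)\,\bigl(\widehat{\zeta}_1(\lambda)\bigr)^{-(1-\delta)}=\bigl(\widehat{\zeta}_1(\lambda)\bigr)^{\delta},\qquad \lambda>0,
\]
using $\widehat{g_\beta}(\lambda)=\lambda^{-\beta}$ and the positivity of $\widehat{\zeta}_1$ on $(0,\infty)$. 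The two desired asymptotics are then immediate: raising the estimates $\widehat{\zeta}_1(t^{-1})\sim\log t$ (as $t\to\infty$) and $\widehat{\zeta}_1(t^{-1})\sim t\log(t^{-1})$ (as $t\to 0^+$) recorded in Example \ref{Example-5} to the power $\delta$ gives $\widehat{\psi^\delta_1}(t^{-1})\sim(\log t)^{\delta}$ and $\widehat{\psi^\delta_1}(t^{-1})\sim\bigl(t\log(t^{-1})\bigr)^{\delta}$, respectively.

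It remains to exhibit $\phi^\delta_1$ with $(\phi^\delta_1,\psi^\delta_1)\in(\mathcal{PC})$. Here I would repeat the complementation step of Example \ref{Example-5}: since $\psi^\delta_1\in(\mathcal{CM})\cap L_{1,loc}(\mathbb{R}_+)$ (local integrability following from $\widehat{\psi^\delta_1}(\lambda)\sim\lambda^{-\delta}(\log\lambda)^{\delta}$ as $\lambda\to\infty$, hence $\psi^\delta_1(t)\sim t^{\delta-1}(\log(1/t))^{\delta}/\Gamma(\delta)$ as $t\to 0^+$ by Theorem \ref{Karamata:Feller}), Theorems 5.4 and 5.5 in \cite{Gripenberg-Londen-Staffans-1990} provide a completely monotone $\phi^\delta_1$ with $\phi^\delta_1\ast\psi^\delta_1=1$. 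Being completely monotone, $\phi^\delta_1$ is nonnegative and nonincreasing, and it is locally integrable because $\widehat{\phi^\delta_1}(\lambda)=1/\bigl(\lambda\,\widehat{\psi^\delta_1}(\lambda)\bigr)\sim\lambda^{-(1-\delta)}(\log\lambda)^{-\delta}$ as $\lambda\to\infty$ forces, again by Theorem \ref{Karamata:Feller}, $\phi^\delta_1(t)\sim t^{-\delta}(\log(1/t))^{-\delta}/\Gamma(1-\delta)$ near $0$. Thus $(\phi^\delta_1,\psi^\delta_1)\in(\mathcal{PC})$.

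The main obstacle I anticipate is this last step, namely checking that $\psi^\delta_1$ genuinely falls under the hypotheses of the complementation theorem. The decisive points are the boundary behavior $\widehat{\psi^\delta_1}(0^+)=\bigl(\widehat{\zeta}_1(0^+)\bigr)^{\delta}=+\infty$ and $\widehat{\psi^\delta_1}(\infty)=0$, together with the local integrability just discussed; all three reduce to the elementary estimates on $\widehat{\zeta}_1$ already available from Example \ref{Example-5}, so I expect no essential difficulty once the composition identity $\widehat{\psi^\delta_1}=(\widehat{\zeta}_1)^{\delta}$ is in place. Everything else is the routine Tauberian bookkeeping of Theorem \ref{Karamata:Feller}.
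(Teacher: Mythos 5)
Your proposal is correct and follows essentially the same route as the paper: apply Lemma \ref{Lemma-Comp-f-g} with $f=\zeta$ (the Kochubei complement) and $g=g_{1-\delta}$ so that $\widehat{\psi^\delta_1}=\bigl(\widehat{\zeta}\bigr)^{\delta}$, obtain the partner kernel from the complete-monotonicity complementation theorems of Gripenberg--Londen--Staffans, and read off the asymptotics by taking $\delta$-th powers of the known behavior of $\widehat{\zeta}(t^{-1})$. The only cosmetic differences are that the paper also constructs $\phi^\delta_1$ explicitly as $g_{1-\delta}\ast h_{1\delta}$ via a second application of the lemma to $\theta$ (and does not separate the case $\delta=1$), and that local integrability of $\psi^\delta_1$ is already guaranteed by the lemma itself (it is the derivative of a Bernstein function vanishing at $0^+$), so your Tauberian detour---which quietly presupposes the Laplace transform exists---is not needed.
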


\begin{proof}
Let $\delta\in(0,1]$. Consider the pair $(\theta,\zeta)\in (\mathcal{PC})$ given by 
\begin{equation}\label{theta-zeta-1}
\theta(t)=\int_{0}^1 \dfrac{t^{\alpha-1}}{\Gamma(\alpha)}d\alpha,\quad \text{and}\quad\zeta(t)=\int_0^\infty \dfrac{e^{-st}}{1+s}ds,\quad t>0.
\end{equation}
It is a well known fact that both $\theta$ and $\zeta$ are completely monotonic functions. So, applying Lemma \ref{Lemma-Comp-f-g} with $f=\theta$ and $g=g_{1-\delta}$, we conclude that there exists $h_{1\delta}\in(\mathcal{CM})$ such that 
\[
\widehat{h_{1\delta}}(\lambda)=\left(\dfrac{\lambda-1}{\lambda\,\log(\lambda)}\right)^{\delta},\quad \lambda>0.
\]
Applying again Lemma \ref{Lemma-Comp-f-g} with $f=\zeta$ and $g=g_{1-\delta}$, we note that there exists $h_{2\delta}\in(\mathcal{CM})$ such that 
\[
\widehat{h_{2\delta}}(\lambda)=\left(\dfrac{\log(\lambda)}{\lambda-1}\right)^\delta, \quad \lambda>0.
\]
Now define the kernels
\begin{equation*}
\phi^\delta_1=g_{1-\delta}\ast h_{2\delta},\quad \text{and}\quad  \psi^\delta_1=h_{1\delta}.
\end{equation*}
Applying directly the Laplace transform, we have 
\begin{equation}\label{theta:delta:log}
\widehat{\phi^\delta_1}(\lambda)=\dfrac{1}{\lambda}\left(\dfrac{\lambda-1}{\log(\lambda)}\right)^\delta,\quad\text{and }\quad \widehat{\psi^\delta_1}(\lambda)=\left(\dfrac{\log(\lambda)}{\lambda-1}\right)^\delta,\quad \lambda>0.
\end{equation}

We note that by construction $\psi^\delta_1\in(\mathcal{CM})$. Hence, it follows from \cite[Theorem 5.4 and Theorem 5.5]{Gripenberg-Londen-Staffans-1990} that $\phi^\delta_1\in(\mathcal{CM})$. In consequence $(\phi^\delta_1,\psi^\delta_1)\in(\mathcal{PC})$. Furthermore, it clear that 
\[
\widehat{\psi^\delta_1}(t^{-1})=\left(\dfrac{\log(t)}{1-t^{-1}}\right)^\delta,\quad t>0,
\]
which in turn implies that 
\[
\widehat{\psi^\delta_1}(t^{-1})\sim\bigl(\log(t)\bigr)^\delta,\quad \text{as}\quad t\to \infty.
\]
To compute the behavior $\widehat{\psi^\delta_1}(t^{-1})$ as $t\to0^+$, we rewrite this function as follows
\[
\widehat{\psi}_\delta\bigl(t^{-1}\bigr)=\left(\dfrac{t\,\log(t)}{t-1}\right)^\delta,\quad t>0,
\]
which implies that
\[
\widehat{\psi}_\delta\bigl(t^{-1}\bigr)\sim (-t\cdot \log(t)\bigr)^{\delta}\quad \text{as}\quad t\to 0^+.
\]

\end{proof}

\begin{remark} Let $\delta\in(0,1]$ and $(\phi_\delta,\psi_\delta)\in(\mathcal{PC})$ given in Corollary \ref{Coro-phi-delta}. If $\delta=1$, then $(\phi_1,\psi_1)$ is the same pair of functions defined by Kochubei in \cite{Kochubei-2008}. On the other hand, since $\widehat{\psi^\delta_1}(t^{-1})\sim (\log(t))^{\delta}$ as $t\to \infty$ we have that $\psi^\delta_1\notin L_1(\mathbb{R}_+)$.
\end{remark}


\begin{example}\label{Example-6}Let $\delta\in(0,1]$. Consider the pair $\displaystyle (k,\ell)=(\phi^\delta_1,\psi^\delta_1)$ given in Corollary \ref{Coro-phi-delta}, then  
\[
\textrm{Var}[X(t)]\sim \dfrac{2\nu}{\eta}\bigl(\log(t)\bigr)^\delta, \quad \text{as}\quad t\to \infty,
\]
and 
\[
\textrm{Var}[X(t)]\sim 2\nu \dfrac{\bigl(t\cdot \log(t^{-1})\bigr)^{2\delta}}{\Gamma(1+2\delta)}, \quad \text{as}\quad t\to 0^+.
\]
\begin{proof} It follows from \eqref{theta:delta:log} that $\widehat{\ell}$ is a regularly varying function of index $\varrho=-\delta$. Further, since 
\[
\widehat{\ell}(t^{-1})\sim (\log(t))^\delta,\ \text{as} \ t\to \infty,
\]
it follows that $\widehat{\ell}(t^{-1})$ is a slowly varying function. Therefore,  Theorem \ref{Theo-Asymptotic-Behavior} implies that 
\[
\textrm{Var}[X(t)]\sim \dfrac{2\nu}{\eta}\bigl(\log(t)\bigr)^\delta, \quad \text{as}\quad t\to \infty,
\]
and
\[
\textrm{Var}[X(t)]\sim 2\nu \dfrac{\bigl(t\cdot \log(t^{-1})\bigr)^{2\delta}}{\Gamma(1+2\delta)}, \quad \text{as}\quad t\to 0^+.
\]

\end{proof}

\end{example}



\begin{corollary}\label{Coro-phi-psi-n}For all $n\in\mathbb{N}$ there exists a pair $(\phi_{n},\psi_{n})\in(\mathcal{PC})$ such that $\phi_{n}\in(\mathcal{CM})$ and 
\[
\widehat{\psi_{n}}(t^{-1})\sim\log^{[n]}(t),\ \text{as}\ t\to\infty,
\]
and 
\[
\widehat{\psi_{n}}(t^{-1})\sim t\, \bigl(\log(t^{-1})\bigr)^{n}\ \text{as}\ t\to0^+,
\]
where $\log^{[n]}=\underbrace{\log\circ\log\circ\dots\circ\log}_{n-\rm{times}}$.
\end{corollary}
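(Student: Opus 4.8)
The plan is to argue by induction on $n$, using the composition Lemma \ref{Lemma-Comp-f-g} to peel off one logarithm at each step. It is convenient to phrase both asymptotics in terms of the spectral variable $\lambda=t^{-1}$: the assertions of the corollary are equivalent to $\widehat{\psi_n}(\lambda)\sim\log^{[n]}(1/\lambda)$ as $\lambda\to0^+$ and $\widehat{\psi_n}(\lambda)\sim(\log\lambda)^n/\lambda$ as $\lambda\to\infty$. For the base case $n=1$ I would take $(\phi_1,\psi_1)$ to be the pair produced by Corollary \ref{Coro-phi-delta} with $\delta=1$; from the explicit formula $\widehat{\psi_1}(\lambda)=\log(\lambda)/(\lambda-1)$ both limits are immediate (as $\lambda\to0^+$ it behaves like $-\log\lambda=\log(1/\lambda)$, and as $\lambda\to\infty$ like $\log(\lambda)/\lambda$), and $\psi_1\in(\mathcal{CM})$.

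For the inductive step, assuming $(\phi_n,\psi_n)\in(\mathcal{PC})$ with $\psi_n\in(\mathcal{CM})$ satisfying the two asymptotics, I would apply Lemma \ref{Lemma-Comp-f-g} with $f=\psi_n$ and $g=\psi_1$ (the \emph{order} matters: taking $f=\psi_1,\ g=\psi_n$ produces powers of $\log^{[2]}$ rather than $\log^{[n+1]}$). This yields a completely monotone $\psi_{n+1}:=h$ with
\[
\widehat{\psi_{n+1}}(\lambda)=\widehat{\psi_n}(\lambda)\,\widehat{\psi_1}\bigl(\widehat{\psi_n}(\lambda)\bigr),\qquad\lambda>0.
\]
To place the new pair in $(\mathcal{PC})$ I would follow the scheme already used in Example \ref{Example-5} and Corollary \ref{Coro-phi-delta}: the transform above is finite for every $\lambda>0$, which forces $\psi_{n+1}\in L_{1,loc}(\mathbb{R}_+)$ (bound $\int_0^1\psi_{n+1}$ by $e^{\lambda}\int_0^1 e^{-\lambda t}\psi_{n+1}(t)\,dt$), and then \cite[Theorem 5.4 and Theorem 5.5]{Gripenberg-Londen-Staffans-1990} provide a completely monotone $\phi_{n+1}$ with $\phi_{n+1}\ast\psi_{n+1}=1$. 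Since completely monotone functions are nonnegative and nonincreasing, $(\phi_{n+1},\psi_{n+1})\in(\mathcal{PC})$ and $\phi_{n+1}\in(\mathcal{CM})$, closing the induction on the structural claims.

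It then remains to propagate the asymptotics. As $\lambda\to0^+$ the inductive hypothesis gives $\widehat{\psi_n}(\lambda)\to+\infty$, so the inner argument of $\widehat{\psi_1}$ is large and I would substitute $\widehat{\psi_1}(\mu)\sim\log(\mu)/\mu$; the factor $\widehat{\psi_n}(\lambda)$ cancels and one is left with $\widehat{\psi_{n+1}}(\lambda)\sim\log\widehat{\psi_n}(\lambda)\sim\log^{[n+1]}(1/\lambda)$. As $\lambda\to\infty$ the hypothesis gives $\widehat{\psi_n}(\lambda)\to0^+$, so the inner argument is small and I would substitute $\widehat{\psi_1}(\mu)\sim\log(1/\mu)$, obtaining $\widehat{\psi_{n+1}}(\lambda)\sim\widehat{\psi_n}(\lambda)\,\log(1/\widehat{\psi_n}(\lambda))\sim\bigl((\log\lambda)^n/\lambda\bigr)\cdot\log\lambda=(\log\lambda)^{n+1}/\lambda$, the lower-order contribution $n\log\log\lambda$ arising from $\log(1/\widehat{\psi_n}(\lambda))=\log\lambda-n\log\log\lambda+o(1)$ being absorbed. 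Translating back to $t=\lambda^{-1}$ gives precisely the two claimed equivalences.

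The step I expect to require the most care is the justification of these substitutions inside the composition, rather than the substitutions themselves. Two points must be checked: first, that the inner argument $\widehat{\psi_n}(\lambda)$ genuinely tends to $+\infty$ (resp. to $0^+$) in each regime, so that the asymptotic for $\widehat{\psi_1}$ is valid there; and second, that in the large-time regime the additive error in $\log\widehat{\psi_n}(\lambda)=\log^{[n+1]}(1/\lambda)+o(1)$ is negligible \emph{relative} to $\log^{[n+1]}(1/\lambda)$, which holds because $\log^{[n+1]}(1/\lambda)\to\infty$ as $\lambda\to0^+$. Both are consequences of the inductive hypothesis together with the elementary fact that $\log^{[n]}$ diverges at infinity.
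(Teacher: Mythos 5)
Your proposal is correct, and it reaches exactly the same object as the paper: your composition $\widehat{\psi_{n+1}}(\lambda)=\widehat{\psi_n}(\lambda)\,\widehat{\psi_1}\bigl(\widehat{\psi_n}(\lambda)\bigr)$ with $\widehat{\psi_1}(\mu)=\log(\mu)/(\mu-1)$ gives precisely the paper's formula $\widehat{\psi_{n+1}}=\widehat{\psi_n}\log(\widehat{\psi_n})/(\widehat{\psi_n}-1)$, so by uniqueness of Laplace transforms you construct the very same kernel. The difference is in how that kernel is produced. The paper never invokes Lemma \ref{Lemma-Comp-f-g} with $g=\psi_1$; instead it applies the lemma with $g=g_{1-\delta}$ to form the fractional powers $\widehat{\omega_n^\delta}=(\widehat{\psi_n})^\delta$, inverts these to get $\varphi_n^\delta$ with $\widehat{\varphi_n^\delta}(\lambda)=\lambda^{-1}(\widehat{\psi_n}(\lambda))^{-\delta}$, and defines $\phi_{n+1}=\int_0^1\varphi_n^\delta\,d\delta$ in the time domain, obtaining complete monotonicity of $\phi_{n+1}$ from closure of $(\mathcal{CM})$ under integration, and only then recovering $\psi_{n+1}$ as the first-kind resolvent. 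You go the other way: build $\psi_{n+1}$ directly in one application of the subordination lemma and recover $\phi_{n+1}$ by inversion. Your route is shorter and makes the recursive structure $\psi_{n+1}=\psi_n\,\text{``}\circ\text{''}\,\psi_1$ transparent; the paper's route makes the distributed-order structure of $\phi_{n+1}$ explicit (mirroring the Kochubei-type base case and Example \ref{Example-4}), which is what the authors exploit later when discussing ultra-slow diffusion. Your asymptotic propagation, including the two justification points about the inner argument tending to $\infty$ (resp. $0^+$) and the absorption of the additive $o(1)$ and of $n\log\log\lambda$, matches the paper's computation and is, if anything, stated more carefully (the paper is slightly cavalier about the sign coming from $\widehat{\psi_n}-1\to-1$ in the $t\to0^+$ regime, which your version handles cleanly by writing $\log(1/\widehat{\psi_n})$). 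No gaps.
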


\begin{proof}The proof  will be done by induction. For $n=1$, we consider the pair $(\phi_{1},\psi_{1})\in(\mathcal{PC})$ given by Corollary \ref{Coro-phi-delta} with $\delta=1$. 

\noindent Let $n>1$. Assume that there exists a pair $(\phi_{n},\psi_{n})\in(\mathcal{PC})$ such that $\phi_{n}\in(\mathcal{CM})$ satisfying 
\[
\widehat{\psi}_{n}(t^{-1})\sim\log^{[n]}(t),\ \text{as}\ t\to\infty.
\]
and 
\[
\widehat{\psi_{n}}(t^{-1})\sim t\, \bigl(\log(t^{-1})\bigr)^{n},\ \text{as}\ t\to0^+.
\]
Since $\phi_{n}\in(\mathcal{CM})$ and $\phi_{n}\ast \psi_{n}=1$, it follows from \cite[Theorem 5.4 and Theorem 5.5]{Gripenberg-Londen-Staffans-1990} that $\psi_n\in(\mathcal{CM})$. Hence, applying Lemma \ref{Lemma-Comp-f-g} with $f=\psi_n$ and $g=g_{1-\delta}$ with $\delta\in(0,1)$, we conclude that for all $\delta\in(0,1)$ there exists $\omega_n^\delta\in(\mathcal{CM})$ such that 
\[
\widehat{\omega_n^\delta}(\lambda)=\big(\widehat{\psi}_{n}(\lambda)\big)^{\delta},\quad \lambda>0.
\]
Using again \cite[Theorem 5.4 and Theorem 5.5]{Gripenberg-Londen-Staffans-1990} we can establish the existence of $\varphi^\delta_{n}\in(\mathcal{CM})$ such that $\omega_n^\delta\ast \varphi_n^\delta=1$, which in turn implies that
\[
\widehat{\varphi_{n}^\delta}(\lambda)=\frac{1}{\lambda}\bigl(\widehat{\psi}_{n}(\lambda)\bigr)^{-\delta},\quad \lambda>0.
\]
Let us now define 
\begin{equation}\label{theta-n}
\phi_{n+1}(t)=\int_0^1 \varphi_{n}^\delta(t)d\delta,\quad t>0.
\end{equation}
 Since the class of completely monotonic functions is closed under addition and pointwise limits, we conclude that $\phi_{n+1}\in(\mathcal{CM})$. In consequence, it follows from \cite[Theorem 5.4 and Theorem 5.5]{Gripenberg-Londen-Staffans-1990} that there exists $\psi_{n+1}\in(\mathcal{CM})$ such that 
 \begin{equation}\label{phin-psin}
 \psi_{n+1}\ast\varphi_{n+1}=1.
 \end{equation}
Furthermore, we have that 
\[
\widehat{\phi_{n+1}}(\lambda)=\int_0^1 \widehat{\varphi^{\delta}_{n}}(\lambda) d\delta=\int_0^1 \dfrac{1}{\lambda}\Bigl(\widehat{\psi}_{n}(\lambda)\Bigr)^{-\delta} d\delta=\dfrac{\widehat{\psi}_{n}(\lambda)-1}{\lambda\, \widehat{\psi}_{n}(\lambda)\,\log\bigl(\widehat{\psi}_{n}(\lambda)\bigr)},\quad \lambda>0.
\]
Since $(\phi_{n+1},\psi_{n+1})\in(\mathcal{PC})$, this in turn implies that 
\begin{equation}\label{Psi-n}
\widehat{\psi_{n+1}}(\lambda)=\dfrac{\widehat{\psi}_{n}(\lambda)\log\bigl(\widehat{\psi}_{n}(\lambda)\bigr)}{\widehat{\psi}_{n}(\lambda)-1},\quad \lambda>0.
\end{equation}
Therefore, we have that 
\[
\widehat{\psi_{n+1}}(t^{-1})=\dfrac{\widehat{\psi_{n}}(t^{-1})\log\bigl(\widehat{\psi_{n}}(t^{-1})\bigr)}{\widehat{\psi_{n}}(t^{-1})-1},\quad t>0.
\]
We note that the inductive hypothesis implies that 
\[
\dfrac{\widehat{\psi}_{n}(t^{-1})}{\widehat{\psi}_{n}(t^{-1})-1}\sim \dfrac{\log^{[n]}(t)}{\log^{[n]}(t)-1}\sim 1,\ \text{as}\ t\to \infty. 
\]

In consequence we have
\[
\widehat{\psi_{n+1}}\bigl(t^{-1}\bigr)\sim\log\bigl(\widehat{\psi}_{n}(t^{-1})\bigr)\sim \log^{[n+1]}(t),\ \text{as}\ t\to\infty. 
\]
On the other hand, since $\widehat{\psi_n}(t^{-1})\to 0$ as $t\to0^+$, we have that
\begin{align*}
\widehat{\psi_{n+1}}\bigl(t^{-1}\bigr)&\sim\widehat{\psi}_{n}(t^{-1})\log\bigl(\widehat{\psi}_{n}(t^{-1})\bigr),\ \text{as}\quad t\to0^+,
\end{align*}
which by the inductive hypothesis is equivalent to 
\begin{align*}
\widehat{\psi_{n+1}}\bigl(t^{-1}\bigr)&\sim (t\, \bigl(\log(t^{-1})\bigr)^{n})\Bigl(\log(t)+\log(\log(t^{-1})^{n})\Bigr),\quad \text{as}\ t\to0^+.
\end{align*}
We recall that 
\[
\lim_{t\to 0^+}\dfrac{\log(\log(t^{-1})^{n})}{\log(t)}=0, 
\]
for all $n\in\mathbb{N}$. Hence  
\[
\widehat{\psi_{n+1}}(t^{-1})\sim t\, \bigl(\log(t^{-1})\bigr)^{n+1},\ \text{as}\ t\to0^+,
\]
 and the proof is complete.
\end{proof}
\begin{remark}\label{Lemma-tech} Let $n\in\mathbb{N}$ and $(\phi_n,\psi_n)\in(\mathcal{PC})$ given in Corollary \ref{Coro-phi-psi-n}. Since $\widehat{\psi_n}(t^{-1})\sim \log^{[n]}(t)$ as $t\to \infty$, we have that for all $n\in\mathbb{N}$ the functions $\psi_n\notin L_{1}(\mathbb{R}_+)$. 
\end{remark}


\begin{example}\label{Example-7}
Let $n\in\{2,3,\cdots\}$. Consider pair $(k,\ell)=(\phi_n,\psi_n)$ given in Corollary \ref{Coro-phi-psi-n}, then  
\[
\textrm{Var}[X(t)]\sim \dfrac{2\nu}{\eta}\log^{[n]}(t), \quad \text{as}\quad t\to \infty,
\]
and 
\[
\textrm{Var}[X(t)]\sim \nu t^2\, \bigl(\log(t^{-1})\bigr)^{2n}, \quad \text{as}\quad t\to 0^+,
\]
where $\log^{[n]}=\underbrace{\log\circ\log\circ\dots\circ\log}_{n-\rm{times}}$.
\end{example}

\begin{proof}We note that for every $n\ge 2$ the function $\widehat{\psi_n}$ is a regularly varying function of index $\varrho=-1$. Indeed, recall that $\widehat{\psi}_1$ is defined by 
\[
\widehat{\psi}_1(t)=\dfrac{\log(t)}{t-1},\quad t>0.
\]
Hence, it clear that $\widehat{\psi}_1$ is a regularly varying function of index $\varrho=-1$. Assume now that $\widehat{\psi}_n$ is a regularly varying function of index $\varrho=-1$. Since $\widehat{\psi_n}(t)\to 0$ as $t\to\infty$, we have that 
\[
t\mapsto\dfrac{\widehat{\psi}_n(t)}{\widehat{\psi}_n(t)-1},\ t>0,
\]
is a regularly varying function of index $\varrho=-1$. Moreover, by properties of the logarithmic function we have that 
\[
t\mapsto \log(\widehat{\psi}_n(t)),\ t>0,
\]
is a slowly varying function. Therefore, it follows from \eqref{Psi-n} that $\widehat{\psi}_{n+1}$ is a regularly varying function of index $\varrho=-1$. On the other hand, it follows from Corollary \ref{Coro-phi-psi-n} that
\begin{equation}\label{psi:n:00}
\widehat{\psi}_n(t^{-1})\sim \log^{[n]}(t),\ \text{as} \ t\to\infty.
\end{equation}
Therefore, $\widehat{\ell}(t^{-1})$ is a slowly varying function and Theorem \ref{Theo-Asymptotic-Behavior} implies that 
\[
\textrm{Var}[X(t)]\sim \dfrac{2\nu}{\eta}\log^{[n]}(t), \quad \text{as}\quad t\to \infty,
\]
and 
\[
\textrm{Var}[X(t)]\sim \nu t^2\, \bigl(\log(t^{-1})\bigr)^{2n}, \quad \text{as}\quad t\to 0^+.
\]
\end{proof}

\begin{corollary}\label{Coro-phi-psi-n-delta}For all $n\in\mathbb{N}$ and $\delta\in(0,1)$ there exists a pair $(\phi_{n}^\delta,\psi_{n}^\delta)\in(\mathcal{PC})$ such that $\phi^\delta_{n}\in(\mathcal{CM})$ and 
\[
\widehat{\psi^\delta_{n}}(t^{-1})\sim\bigl(\log^{[n]}(t)\bigr)^\delta,\ \text{as}\ t\to\infty,
\]
and
\[
\widehat{\psi_{n}}(t^{-1})\sim t^\delta\, \bigl(\log(t^{-1})\bigr)^{\delta n}\ \text{as}\ t\to0^+.
\]

\end{corollary}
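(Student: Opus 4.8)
The plan is to bootstrap from the pair $(\phi_n,\psi_n)\in(\mathcal{PC})$ furnished by Corollary \ref{Coro-phi-psi-n} and to raise its resolvent kernel to the power $\delta$ by a single application of the composition Lemma \ref{Lemma-Comp-f-g}. First I would recall that in the proof of Corollary \ref{Coro-phi-psi-n} one already knows $\phi_n\in(\mathcal{CM})$ together with $\phi_n\ast\psi_n=1$, so by \cite[Theorem 5.4 and Theorem 5.5]{Gripenberg-Londen-Staffans-1990} the kernel $\psi_n$ is itself completely monotonic and, being the $\ell$-component of a $(\mathcal{PC})$ pair, lies in $L_{1,loc}(\mathbb{R}_+)$. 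Since $\delta\in(0,1)$, the function $g_{1-\delta}$ is completely monotonic and locally integrable as well. Hence I may apply Lemma \ref{Lemma-Comp-f-g} with $f=\psi_n$ and $g=g_{1-\delta}$; because $\widehat{g_{1-\delta}}(\lambda)=\lambda^{\delta-1}$, this produces a function $\psi_n^\delta\in(\mathcal{CM})$ whose Laplace transform is
\[
\widehat{\psi_n^\delta}(\lambda)=\widehat{\psi_n}(\lambda)\,\bigl(\widehat{\psi_n}(\lambda)\bigr)^{\delta-1}=\bigl(\widehat{\psi_n}(\lambda)\bigr)^{\delta},\quad \lambda>0.
\]

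Having $\psi_n^\delta\in(\mathcal{CM})$ in hand, I would invoke \cite[Theorem 5.4 and Theorem 5.5]{Gripenberg-Londen-Staffans-1990} once more to obtain a completely monotonic kernel $\phi_n^\delta$ with $\phi_n^\delta\ast\psi_n^\delta=1$, equivalently $\widehat{\phi_n^\delta}(\lambda)=\lambda^{-1}\bigl(\widehat{\psi_n}(\lambda)\bigr)^{-\delta}$. This simultaneously yields $\phi_n^\delta\in(\mathcal{CM})$ and $(\phi_n^\delta,\psi_n^\delta)\in(\mathcal{PC})$, which are the structural requirements of the statement.

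The asymptotics are then immediate from the identity $\widehat{\psi_n^\delta}(t^{-1})=\bigl(\widehat{\psi_n}(t^{-1})\bigr)^{\delta}$ together with the estimates already established in Corollary \ref{Coro-phi-psi-n}. Indeed, $\widehat{\psi_n}(t^{-1})\sim\log^{[n]}(t)$ as $t\to\infty$ and $\widehat{\psi_n}(t^{-1})\sim t\,\bigl(\log(t^{-1})\bigr)^{n}$ as $t\to 0^+$; since raising positive quantities to the fixed power $\delta$ preserves the relation $\sim$, one obtains $\widehat{\psi_n^\delta}(t^{-1})\sim\bigl(\log^{[n]}(t)\bigr)^{\delta}$ at infinity and $\widehat{\psi_n^\delta}(t^{-1})\sim t^{\delta}\,\bigl(\log(t^{-1})\bigr)^{\delta n}$ at the origin. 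I do not anticipate a genuine obstacle here: the whole argument is a one-step reduction to Corollary \ref{Coro-phi-psi-n}, and the only points needing care are verifying the hypotheses of Lemma \ref{Lemma-Comp-f-g} (the local integrability and complete monotonicity of both $\psi_n$ and $g_{1-\delta}$, the latter forcing the restriction $\delta<1$ so that $g_{1-\delta}$ remains integrable) and recording that $x\mapsto x^{\delta}$ is continuous and therefore transports the asymptotic equivalences from $\widehat{\psi_n}$ to $\widehat{\psi_n^\delta}$.
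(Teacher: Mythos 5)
Your proof is correct and follows essentially the same route as the paper's: both hinge on Lemma \ref{Lemma-Comp-f-g} applied with $g=g_{1-\delta}$ to raise $\widehat{\psi_n}$ to the power $\delta$, followed by the cited Gripenberg--Londen--Staffans theorems to produce a completely monotonic convolution inverse, and both then read off the asymptotics from $\widehat{\psi_n^\delta}(t^{-1})=\bigl(\widehat{\psi_n}(t^{-1})\bigr)^\delta$ together with Corollary \ref{Coro-phi-psi-n}. The only (harmless) difference is that the paper applies the composition lemma to both $\phi_n$ and $\psi_n$ and defines $\phi_n^\delta=g_{1-\delta}\ast h$ explicitly before verifying the $(\mathcal{PC})$ identity, whereas you obtain $\phi_n^\delta$ directly as the convolution inverse of $\psi_n^\delta$; the two constructions give the same Laplace transform $\lambda^{\delta-1}\bigl(\widehat{\phi_n}(\lambda)\bigr)^{\delta}$, so the resulting kernels coincide.
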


\begin{proof}Let $n\in\mathbb{N}$ and $\delta\in(0,1)$. Consider the pair $(\phi_n,\psi_{n})\in (\mathcal{PC})$ given in Corollary \ref{Coro-phi-psi-n}. According to Lemma \ref{Lemma-Comp-f-g} there are completely monotonic functions $h_{1n}^{\delta}$ and $h_{2n}^{\delta}$ such that 
\[
\widehat{h_{1n}^{\delta}}(\lambda)=\bigl(\widehat{\phi_{n}}(\lambda)\bigr)^{\delta},\quad \lambda>0,
\]
and
\[
\widehat{h_{2n}^{\delta}}(\lambda)=\bigl(\widehat{\psi_{n}}(\lambda)\bigr)^\delta, \quad \lambda>0.
\]
Now define the kernels
\begin{equation}\label{theta:log:log:delta}
\phi_n^\delta=g_{1-\delta}\ast h_{2n}^{\delta},\quad \text{and}\quad  \psi_n^\delta=h_{1n}^{\delta}.
\end{equation}
Applying directly the Laplace transform, we have 
\begin{equation*}
\widehat{\phi_n^\delta}(\lambda)=\dfrac{1}{\lambda}\left(\widehat{\phi_n}(\lambda)\right)^\delta\quad\text{and }\quad \widehat{\psi_n^\delta}(\lambda)=\left(\widehat{\psi_n}(\lambda)\right)^\delta ,\quad \lambda>0.
\end{equation*}

We note that by construction $\psi_n^\delta\in(\mathcal{CM})$. Hence, it follows from \cite[Theorem 5.4 and Theorem 5.5]{Gripenberg-Londen-Staffans-1990} that $\phi^\delta_n\in(\mathcal{CM})$. In consequence $(\phi^\delta_n,\psi^\delta_n)\in(\mathcal{PC})$. The rest of the proof follows the same ideas of Corollary \ref{Coro-phi-psi-n}.
\end{proof}

\begin{example}\label{Example-8}
Let $n\in\{2,3,\cdots\}$ and $\delta\in(0,1)$. Consider pair $(k,\ell)=(\phi_n,\psi_n)$ given in Corollary \ref{Coro-phi-psi-n-delta}, then  
\[
\textrm{Var}[X(t)]\sim \dfrac{2\nu}{\eta}\bigl(\log^{[n]}(t)\bigr)^\delta, \quad \text{as}\quad t\to \infty,
\]
and 
\[
\textrm{Var}[X(t)]\sim 2\nu \dfrac{t^{2\delta}\, \bigl(\log(t^{-1})\bigr)^{2n\delta}}{\Gamma(1+2\delta)}, \quad \text{as}\quad t\to 0^+,
\]
where $\log^{[n]}=\underbrace{\log\circ\log\circ\dots\circ\log}_{n-\rm{times}}$.
\end{example}

\begin{proof} Following the same ideas of Corollary \ref{Coro-phi-psi-n} we note that for all $n\in\mathbb{N}$ the functions $\widehat{\psi_n^\delta}$ are regularly varying functions of index $\varrho=-\delta$ and $\widehat{\psi_n^\delta}(t^{-1})$ is a slowly varying function. The rest of the proof is similar to the proof of Example \ref{Example-7}.
\end{proof}


\section{Application to ultra slow diffusion equations}

There are another contexts where the pairs $(k,\ell)\in(\mathcal{PC})$ play a fundamental role. For example, this type of functions has been successfully exploited to study the so-called {\it sub-diffusion processes}, see \cite{Kemppainen-Siljander-Vergara-Zacher-2016,Pozo-Vergara-2019,Vergara-Zacher-2015} and references therein. In order to fix some ideas and explain why the results developed in this work could be interesting in the theory of subdiffusion processes, we consider the following equation
\begin{align}
\partial_t (k\ast(u(\cdot,x)-u_0(x)))(t)-\Delta u(t,x)&=0,\quad t>0, x\in\mathbb{R}^d,\label{Sub-Diffusion}\\
u(0,x)&=u_0(x), \quad x\in\mathbb{R}^d,
\end{align}
where $k$ is a kernel of type $(\mathcal{PC})$ and $u_0$ is a given function. It has been proved in \cite[Section 2]{Kemppainen-Siljander-Vergara-Zacher-2016} that the fundamental solution of \eqref{Sub-Diffusion} coincides with the probability density function of a stochastic process $X(t)$. Moreover, in \cite[Lemma 2.1]{Kemppainen-Siljander-Vergara-Zacher-2016} it has been proved that the {\it mean square displacement} $M(t)$ of such process is given by 
\begin{equation}\label{MSD}
M(t)=2d(1\ast \ell)(t),\quad t\ge 0.
\end{equation}
The function $M(t)$ allows to measure how fast or slow is the diffusion of the equation \eqref{Sub-Diffusion}. It is worthwhile to mention that the slowest known rate of growth of $M(t)$ follows a logarithmic law, for instance see \cite{Kochubei-2008} and references therein. In such work Kochubei considered functions of the form 
\[
k(t)=\int_0^1 g_{\alpha}(t)\sigma(\alpha)d\alpha,\quad t>0,
\]
where $\sigma(t)$ with $t\in[0,1]$ is a continuous, non-negative function different from zero on a set of positive measure.

Those equations of the form \eqref{Sub-Diffusion} whose mean square displacement follows a logarithmic rate (or even slower) are known in the specialized literature as {\it ultra slow diffusion equations} and they are strongly related with {\it ultraslow inverse subordinators}, see \cite{Meerschaert-Scheffler-2006}.  

Our work allows to study some ultra-slow diffusion equations which are not considered before. For instance, the equation \eqref{Sub-Diffusion} with $(k,\ell)=(\phi_n,\psi_n)$ for some $n\in\mathbb{N}$, where $(\phi_n,\psi_n)$ has been defined in Corollary \ref{Coro-phi-psi-n}. According to \eqref{MSD} we have that the Laplace transform of $M$ is given by
\[
\widehat{M}(\lambda)=\frac{2d}{\lambda}\,\widehat{\psi_n}(\lambda),\quad\lambda>0,
\]
which by Karamata-Feller's Theorem \ref{Karamata:Feller} and the asymptotic behavior of $\widehat{\psi_n}$ given in \eqref{psi:n:00} imply that 
\[
M(t)\sim 2d\, \log^{[n]}(t),\quad \text{as}\quad t\to \infty.
\]
In consequence, the mean square displacement $M(t)$ grows (at infinity) slower than a logarithmic function. 

This procedure can be applied to all the pairs of functions in $(\mathcal{PC})$ defined in the Corollary \ref{Coro-phi-delta} and Corollary \ref{Coro-phi-psi-n-delta}. As far we know, this implies that there are an infinite number of ultra-slow diffusion equations which have not been analyzed before. All these new interesting examples will be  studied in a forthcoming work.

\bibliographystyle{amsplain}
\bibliography{Ref-Francisco}


\end{document}